\newtheorem{theorem}{Theorem}
\newtheorem{lemma}[theorem]{Lemma}
\newtheorem{Rule}{Rule}
\theoremstyle{definition}
\newtheorem{Assumption}{Assumption}
\theoremstyle{remark}
\begin{document}

\title[On saturation of the discrepancy principle]
{On saturation of the discrepancy principle for nonlinear Tikhonov regularization in Hilbert spaces}

\author{Qinian Jin}

\address{Mathematical Sciences Institute, Australian National
University, Canberra, ACT 2601, Australia}
\email{qinian.jin@anu.edu.au} \curraddr{}

\keywords{Nonlinear ill-posed problems, Tikhonov regularization, the discrepancy principle, saturation}

\begin{abstract}
In this paper we revisit the discrepancy principle for Tikhonov regularization of nonlinear 
ill-posed problems in Hilbert spaces and provide some new and improved saturation results 
under less restrictive conditions, comparing with the existing results in the literature. 
\end{abstract}

\def\d{\delta}
\def\l{\langle}
\def\r{\rangle}
\def\a{\alpha}
\def\la{\lambda}
\def\N{\mathcal N}

\maketitle

\section{\bf Introduction}

Let $X$ and $Y$ be two  Hilbert spaces whose inner products and norms are denoted by the same notations
$\l\cdot, \cdot\r$ and $\|\cdot\|$. Let $F: \mbox{dom}(F)\subset X \to Y$ be a nonlinear 
operator with domain $\mbox{dom}(F)$. We consider the equation
\begin{equation}\label{TRH1}
F(x)=y
\end{equation}
and assume (\ref{TRH1}) is solvable in $\mbox{dom}(F)$, i.e. $y \in \mbox{Ran}(F)$, the range of $F$.
Since the solution of (\ref{TRH1}) may not be unique, it is necessary to select a right one.
To this end, we make a first guess $x^*\in \mbox{dom}(F)$ and try to find a solution closest to $x^*$. 
We say $x^\dag\in \mbox{dom}(F)$ is a $x^*$-minimum-norm solution of (\ref{TRH1}) if
$
F(x^\dag)=y
$
and
$$
\|x^\dag-x^*\|=\min_{x\in \mbox{dom}(F)} \{\|x-x^*\|: F(x)=y\}.
$$

We consider the situation that (\ref{TRH1}) is ill-posed in the sense that its solution does not depend 
continuously on the data. The exact data $y$ is usually unavailable in practical applications. Instead we 
only have a noisy data $y^\d$ obtained by measurement which satisfies
\begin{equation}\label{TRH2}
\|y^\d-y\|\le \d
\end{equation}
with a known noise level $\d>0$. One of the important questions is then to reconstruct $x^\dag$ by using 
$y^\d$. This requires the regularization techniques. The most famous method is Tikhonov regularization in 
which the solution of the minimization problem
\begin{equation}\label{TRH3}
\min_{x\in \mbox{dom}(F)}\left\{\|F(x)-y^\d\|^2+\a\|x-x^*\|^2\right\}
\end{equation}
is used to approximate $x^\dag$, where $\a>0$ is the so-called regularization parameter. The 
existence of $x^*$-minimum-norm solution and the regularized solution $x_\a^\d$ can be guaranteed 
when $F$ is weakly closed; see \cite{EKN89}. The solution 
of (\ref{TRH3}) may not be unique in general, we will use $x_\a^\d$ to denote any solution of (\ref{TRH3}).
When $x_\a^\d$ is used to approximate $x^\dag$, the accuracy depends crucially on the choice of the 
regularization parameter $\a$ which can be selected by either {\it a priori} or {\it a posteriori} 
rules (\cite{EKN89,JH1999,SEK1993,TJ2003}). One of the most prominent {\it a posteriori} parameter 
choice rule is Morozov's discrepancy principle which can be stated as follows. 

\begin{Rule}[The discrepancy principle]\label{Rule:DP}
Let $\tau\ge 1$ be a given number. Define $\a(\d, y^\d) > 0$ to be the number such that 
$\|F(x_{\a(\d, y^\d)}^\d) - y^\d \| = \tau \d$. 
\end{Rule}

One may refer to \cite{Jin1999} for a discussion on the existence of $\a(\d, y^\d)$ satisfying 
Rule \ref{Rule:DP}. With $\a(\d, y^\d)$ chosen by Rule \ref{Rule:DP}, it has been shown in \cite{EKN89} 
that $x_{\a(\d, y^\d)}^\d \to x^\dag$ as $\d\to 0$ and if $F$ is Fr\'{e}chet differentiable in a 
neighborhood of $x^\dag$ whose Fr\'{e}chet derivative $F'(x)$ is Lipschitz continuous with constant 
$L$ and if the source condition $x^\dag - x^* = F'(x^\dag)^* u$ holds with $L\|u\|<1$, then 
\begin{align}\label{rate}
\sup\left\{\|x_{\a(\d, y^\d)}^\d - x^\dag\|: \|y^\d - y\| \le \d\right\} = O(\d^{1/2})
\end{align}
for the worst case error. It is natural to ask if the rate in (\ref{rate}) can be improved under a higher 
order source condition $x^\dag - x^* = \mbox{Ran}((F'(x^\dag)^* F'(x^\dag))^\nu)$ with $\nu >1/2$. For 
compact linear operator $F$, Groetsch showed in \cite{G1983,Gr84} that if the rate $O(\d^{1/2})$ in 
(\ref{rate}) is replaced by $o(\d^{1/2})$ then it must have $x^*= x^\dag$. Thus, for a general choice 
of the initial guess $x^*$, the rate in (\ref{rate}) is the best possible in the worst case scenario which 
is known as the saturation of the discrepancy principle for linear Tikhonov regularization in Hilbert spaces.  
In \cite{S1993a} Scherzer gave a saturation result on Rule \ref{Rule:DP} for nonlinear Tikhonov regularization 
in Hilbert spaces. His proof relies on several conditions on the nonlinear operator $F$ some of which seem 
restrictive. Furthermore, the argument in \cite{S1993a} requires $\tau$ to be sufficiently large which seems 
unnatural. In this note we will provide new and improved saturation results for Rule \ref{Rule:DP} 
for nonlinear Tikhonov regularization (\ref{TRH3}) in Hilbert spaces under less restrictive conditions.

\section{\bf Main results}

The saturation result proved in \cite{S1993a} on Rule \ref{Rule:DP} for nonlinear Tikhonov regularization in 
Hilbert spaces is based on the following technical conditions on the operator $F$, where $B_\rho(x^\dag) : = 
\{x\in X: \|x- x^\dag\| < \rho\}$. 

\begin{Assumption}\label{Ass.1}
{\it There is $\rho>0$ such that $B_\rho(x^\dag) \subset \emph{dom}(F)$ and $F$ is Fr\'{e}chet dfferentiable 
on $B_\rho(x^\dag)$. Moreover, there exists $\kappa_0\ge 0$ such that for any $x, z \in B_\rho(x^\dag)$ 
there is a bounded linear operator $R(x,z): X \to X$ such that 
$$
F'(x) = F'(z) R(x, z)  \quad \mbox{and} \quad \|I - R(x, z)\| \le \kappa_0 \|x-z\|.
$$   
}
\end{Assumption}

\begin{Assumption}\label{Ass.2}
{\it There exists $\kappa_1\ge 0$ such that for any $x, z \in B_\rho(x^\dag)$ there is a bounded linear 
operator $S(x,z): Y \to Y$ such that 
$$
F'(x) = S(x,z) F'(z)   \quad \mbox{and} \quad \|I - S(x, z)\| \le \kappa_1 \|x-z\|.
$$
}
\end{Assumption}

\begin{theorem}[\cite{S1993a}]\label{thm:os}
Let Assumption \ref{Ass.1} and Assumption \ref{Ass.2} hold. Assume that $F'(x^\dag)$ 
is compact with infinite rank and $\max\{\kappa_0, \kappa_1\} \|x^*-x^\dag\|$ is 
sufficiently small. Consider Tikhonov regularization (\ref{TRH3}) and let 
$\a(\d, y^\d)$ be determined by Rule \ref{Rule:DP} with sufficiently large 
$\tau>1$. If 
$$
\sup\left\{ \|x_{\a(\d, y^\d)}^\d - x^\dag\|: \|y^\d-y\|\le \d\right\} = o(\d^{1/2})
$$
as $\d\to 0$, then $x^\dag = x^*$. 
\end{theorem}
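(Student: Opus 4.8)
The plan is to argue by contraposition: assuming $x^\dag\neq x^*$, so that $e:=x^\dag-x^*\neq 0$, I will exhibit a sequence $\delta_k\downarrow 0$ and admissible data $y^{\delta_k}$ with $\|y^{\delta_k}-y\|\le\delta_k$ for which $\|x_{\alpha(\delta_k,y^{\delta_k})}^{\delta_k}-x^\dag\|\ge c\,\delta_k^{1/2}$ with a fixed constant $c>0$, thereby ruling out the rate $o(\delta^{1/2})$. The driving mechanism is the classical noise-amplification obstruction for the linear problem, transplanted to the nonlinear one: write $T:=F'(x^\dag)$, which by hypothesis is compact of infinite rank, and let $\{(\sigma_n;u_n,v_n)\}$ be its singular system, so $\sigma_n\downarrow 0$, $Tu_n=\sigma_n v_n$, $T^*v_n=\sigma_n u_n$. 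For the linear Tikhonov problem one has the exact identity $x_\alpha^\delta-x^\dag=(T^*T+\alpha I)^{-1}T^*(y^\delta-y)-\alpha(T^*T+\alpha I)^{-1}e$, whose first term amplifies a noise component along $v_n$ by the factor $\sigma_n/(\sigma_n^2+\alpha)$, equal to $1/(2\sqrt{\alpha})$ when $\sigma_n^2=\alpha$. Feeding in data $y^\delta=y\pm\delta v_n$ and choosing the index so that the parameter produced by Rule~\ref{Rule:DP} satisfies $\alpha\approx\sigma_n^2$ then forces an error of size $\approx\delta/(2\sqrt\alpha)$, which is $\gtrsim\delta^{1/2}$ precisely when $\alpha\lesssim\delta$; I will verify this last balance directly from the discrepancy equation, using the noise-free residual $B(\alpha):=\|\alpha(TT^*+\alpha I)^{-1}Te\|$.

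Second, I will transfer these facts to the nonlinear minimizer using Assumptions~\ref{Ass.1} and~\ref{Ass.2}. From Assumption~\ref{Ass.2} and the fundamental theorem of calculus one gets $F(x)-F(x^\dag)=\bar S\,T(x-x^\dag)$ with $\|I-\bar S\|\le\tfrac12\kappa_1\|x-x^\dag\|$, so the residual of $x_\alpha^\delta$ is, up to a factor close to the identity, the linear residual $T(x_\alpha^\delta-x^\dag)-(y^\delta-y)$. From the optimality condition $F'(x_\alpha^\delta)^*(F(x_\alpha^\delta)-y^\delta)+\alpha(x_\alpha^\delta-x^*)=0$ together with Assumption~\ref{Ass.1} (which gives $F'(x_\alpha^\delta)^*=R(x_\alpha^\delta,x^\dag)^*T^*$) I will derive, writing $z:=x_\alpha^\delta-x^\dag$ and $y^\delta-y=\delta w$, an equation of the form $(\alpha I+R^*T^*\bar S\,T)z=\delta\,R^*T^*w-\alpha e$, which is a small perturbation of the linear normal equation $(\alpha I+T^*T)z_{\mathrm{lin}}=\delta T^*w-\alpha e$. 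The point is that the discrepancy between $z$ and $z_{\mathrm{lin}}$ carries a factor $\|I-R\|+\|I-\bar S\|=O(\kappa\|x_\alpha^\delta-x^\dag\|+\kappa\|e\|)$, so that $\|z-z_{\mathrm{lin}}\|\le C\kappa\|e\|\,\|z_{\mathrm{lin}}\|$ once $x_\alpha^\delta$ is known to lie near $x^\dag$ (which follows from the convergence $x_\alpha^\delta\to x^\dag$ already recorded in the Introduction from \cite{EKN89}); here $\kappa:=\max\{\kappa_0,\kappa_1\}$.

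Third, I will extract the lower bound from the $u_n$-component. Projecting $z=z_{\mathrm{lin}}+(z-z_{\mathrm{lin}})$ onto $u_n$, the linear part contributes $\pm\delta\sigma_n/(\sigma_n^2+\alpha)-\alpha e_n/(\sigma_n^2+\alpha)$, where $e_n=\langle e,u_n\rangle$; choosing the sign of the noise so that the two terms do not cancel gives a $u_n$-component of modulus at least $\delta\sigma_n/(\sigma_n^2+\alpha)\approx\delta/(2\sqrt\alpha)\approx c'\delta^{1/2}$, while the perturbation contributes at most $C\kappa\|e\|\,\|z_{\mathrm{lin}}\|\le C\kappa\|e\|\,\delta^{1/2}$. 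The constant $c'>0$ survives because, after inserting $\alpha\approx\sigma_n^2$ and $\tau\delta\approx B(\alpha)$, it reduces to a positive multiple of $\sqrt{B(\alpha)/(\tau\alpha)}$, and $B(\alpha)/\alpha$ stays bounded away from $0$ as $\alpha\downarrow 0$ whenever $e\neq 0$. Hence $\|z\|\ge(c'-C\kappa\|e\|)\delta^{1/2}$, which is a positive multiple of $\delta^{1/2}$ exactly because $\kappa\|x^*-x^\dag\|$ is assumed sufficiently small; this is where that hypothesis enters in an essential way, while the largeness of $\tau$ is used to keep $\alpha$ in the regime where $B(\alpha)\approx\tau\delta$ determines it cleanly from the bias. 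Letting $n\to\infty$ then produces the desired sequence $\delta_k\downarrow 0$.

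I expect the main obstacle to be the self-consistent control of the implicitly defined parameter $\alpha=\alpha(\delta,y^\delta)$: it must be pinned to the critical band $\sigma_n^2\approx\alpha$ while $y^\delta$ itself depends on $\delta$, and one must simultaneously guarantee that $x_\alpha^\delta$ stays close enough to $x^\dag$ for the perturbation estimates above to be uniform along the whole sequence. I plan to handle this by an intermediate-value argument: for fixed $n$, the map $\delta\mapsto\alpha(\delta,y+\delta v_n)$ is monotone and continuous and tends to $0$ as $\delta\downarrow 0$, so a value $\delta_n$ with $\alpha(\delta_n,\cdot)=\sigma_n^2$ can be selected, after which the relation $\tau\delta_n\approx B(\sigma_n^2)\to 0$ forces $\delta_n\downarrow 0$. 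The remaining care is purely quantitative, namely showing that all the nonlinear correction terms stay below $c'\delta^{1/2}$ uniformly, which reduces once more to the assumed smallness of $\kappa\|x^*-x^\dag\|$.
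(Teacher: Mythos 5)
Your plan is a genuinely different route from the one the paper sketches for this theorem (Scherzer's reduction to a discrepancy principle for the linearization, followed by Groetsch's saturation result for linear Tikhonov regularization), and it is in fact closer in spirit to the paper's own proof of the improved Theorem~\ref{TRH.thm1}. However, it contains a genuine gap at exactly the point you flag as the main obstacle. Your construction needs the regularization parameter to sit in the critical band $\alpha\approx\sigma_n^2$, and you propose to enforce this by an intermediate-value argument on $\delta\mapsto\alpha(\delta,y+\delta v_n)$. For \emph{nonlinear} Tikhonov regularization this map is neither continuous nor monotone, and need not even be single-valued: the minimizer $x_\alpha^\delta$ is in general non-unique and the residual $\alpha\mapsto\|F(x_\alpha^\delta)-y^\delta\|$ can jump (the paper itself remarks that even the existence of $\alpha(\delta,y^\delta)$ in Rule~\ref{Rule:DP} is not guaranteed in general). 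So the selection of $\delta_n$ with $\alpha(\delta_n,\cdot)=\sigma_n^2$ cannot be justified from the stated hypotheses. Two further holes: (i) if $F'(x^\dag)(x^\dag-x^*)=0$ while $x^\dag\ne x^*$, your bias function $B(\alpha)$ vanishes identically, and the relation $\tau\delta\approx B(\alpha)$ --- which you use both to force $\delta_n\to 0$ and to bound the constant $c'$ from below --- degenerates; (ii) the estimate $\|z_{\mathrm{lin}}\|\le C\delta^{1/2}$, used to dominate the nonlinear correction, is false without a source condition, since $\|\alpha(T^*T+\alpha I)^{-1}e\|$ need not decay at any particular rate as $\alpha\to 0$.

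All three defects disappear if you give up trying to pin $\alpha$ and instead argue as the paper does for Theorem~\ref{TRH.thm1}: take $\delta_k:=\lambda_k\in\sigma(AA^*)$, perturb the data by $-\delta_k G_{\lambda_k}z_k$ with the spectral window $G_{\lambda_k}=F_{3\lambda_k/2}-F_{\lambda_k/2}$ and $z_k$ aligned with $A(x^\dag-x^*)$ when that vector survives the projection (arbitrary normalized otherwise), and establish the lower bound $\|\hat x_{\alpha_k}^{\delta_k}-x^\dag\|^2\ge (\delta_k^2/\lambda_k)/\bigl(2(\alpha_k/\lambda_k+3/2)^2\bigr)$, which holds for \emph{every} value of $\alpha_k$ the rule might return. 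The assumed $o(\delta^{1/2})$ rate then forces $\alpha_k/\delta_k\to\infty$, and the Euler equation gives $\|x_{\alpha_k}^{\delta_k}-x^*\|\le C_1\tau\delta_k/\alpha_k\to 0$, so $x^*=x^\dag$ follows from $x_{\alpha_k}^{\delta_k}\to x^\dag$ without any lower bound of the form $c\,\delta^{1/2}$ on the error ever being needed. Note also that this repaired argument consumes only Assumption~\ref{Ass.1} (to compare $x_\alpha^\delta$ with $\hat x_\alpha^\delta$) together with the inequality $\|F(x_{\alpha}^{\delta})-y^{\delta}\|\le\tau\delta$; neither Assumption~\ref{Ass.2}, nor the smallness of $\max\{\kappa_0,\kappa_1\}\|x^*-x^\dag\|$, nor the largeness of $\tau$ is actually required --- which is precisely the improvement Theorem~\ref{TRH.thm1} records over the statement you were asked to prove.
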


The proof idea of Theorem \ref{thm:os} given in \cite{S1993a} is to connect Rule \ref{Rule:DP}  
for the nonlinear Tikhonov regularization (\ref{TRH3}) with a discrepancy principle for its 
linearization at the sought solution $x^\dag$ and then use the saturation result of Groetsch 
(\cite{G1983,Gr84}) for linear Tikhonov regularization. More precisely, replacing $F(x)$ in 
(\ref{TRH3}) by its linearization $y + A (x-x^\dag)$ at $x^\dag$, where $A := F'(x^\dag)$, one 
may consider the linearized minimization problem
\begin{align}\label{LTRH}
\min_{x\in X}\left\{\|A x-q^\d\|^2
+\a\|x-x^*\|^2\right\},\quad q^\d := A x^\dag+y^\d-y
\end{align}
whose unique minimizer is given by
\begin{equation}\label{LTRH1}
\hat{x}_\a^\d=x^\dag+(\a I+A^*A)^{-1} \left(\a (x^*-x^\dag)+A^*(y^\d-y)\right).
\end{equation}
It is then shown that if $\a$ is chosen by Rule \ref{Rule:DP}, there must exists 
a constant $c>1$ such that 
\begin{align}\label{LDP}
\|A \hat x_\a^\d - q^\d \| = \hat \tau \d
\end{align}
for some number $\hat \tau \in [1, c]$. Once this is achieved, the saturation result 
on the discrepancy principle for linear Tikhonov regularization can be used directly 
to conclude $x^* - x^\dag =0$. In order to achieve the above connection between 
Rule \ref{Rule:DP} for nonlinear Tikhonov regularization and a discrepancy 
principle for its linearization,one needs to compare $\|F(x_\a^\d) - y^\d\|$ with 
$\|A \hat x_\a^\d - q^\d\|$ and guarantee the existence of a constant $c$ such that 
(\ref{LDP}) holds for some $\hat \tau \in [1, c]$. These require the number $\tau$ 
in Rule \ref{Rule:DP} to be sufficiently large and the nonlinear operator $F$ to satisfy 
Assumption \ref{Ass.1} and Assumption \ref{Ass.2} with sufficiently small 
$\max\{\kappa_0, \kappa_1\} \|x^* - x^\dag\|$. 

In the following we will present some new and improved saturation results on Rule \ref{Rule:DP} 
for nonlinear Tikhonov regularization (\ref{TRH3}) in Hilbert spaces under less restrictive 
conditions. Our first result shows that the conclusion of Theorem \ref{thm:os} holds for 
Rule \ref{Rule:DP} with any $\tau >0$ under merely Assumption \ref{Ass.1} and the smallness 
of $\max\{\kappa_0, \kappa_1\} \|x^* - x^\dag\|$ is not required.

\begin{theorem}\label{TRH.thm1}
Let Assumption \ref{Ass.1} hold. Assume that, for the operator $A:= F'(x^\dag)$, there 
exists a sequence of positive numbers $\{\la_k\}$ in the spectrum $\sigma(A A^*)$ of $AA^*$ 
such that $\lim_{k\to \infty} \la_k =0$. Consider the nonlinear Tikhonov regularization 
(\ref{TRH3}) and assume that there is a constant $\tau>0$ such that, for any small $\d>0$ 
and any $y^\d$ satisfying $\|y^\d - y\| \le \d$, a number $\a(\d, y^\d) >0$ can be chosen such 
that $\|F(x_{\a(\d, y^\d)}^\d) - y^\d\| \le \tau \d$. If 
\begin{align}\label{8}
\sup\left\{ \|x_{\a(\d, y^\d)}^\d - x^\dag\|: \|y^\d-y\|\le \d\right\} = o(\d^{1/2})
\end{align}
as $\d\to 0$, then $x^\dag = x^*$. 
\end{theorem}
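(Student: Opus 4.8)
The plan is to argue by contraposition: assuming $h:=x^*-x^\dag\neq0$, I will produce a sequence $\delta_k\to0$ together with admissible data for which the reconstruction error stays bounded below by a fixed multiple of $\delta_k^{1/2}$, contradicting (\ref{8}). Two elementary identities drive everything. First, the first-order optimality condition for (\ref{TRH3}) gives $\alpha(x_\alpha^\delta-x^*)=-F'(x_\alpha^\delta)^*(F(x_\alpha^\delta)-y^\delta)$; writing $A:=F'(x^\dag)$, $R:=R(x_\alpha^\delta,x^\dag)$ and $r_\alpha^\delta:=F(x_\alpha^\delta)-y^\delta$, Assumption \ref{Ass.1} yields $F'(x_\alpha^\delta)^*=R^*A^*$ and hence
\begin{equation*}
e_\alpha^\delta:=x_\alpha^\delta-x^\dag=h-\tfrac1\alpha R^*A^*r_\alpha^\delta .
\end{equation*}
Second, the fundamental theorem of calculus combined with Assumption \ref{Ass.1} gives $F(x_\alpha^\delta)-y=A\tilde w$ with $\|\tilde w-e_\alpha^\delta\|\le\tfrac{\kappa_0}{2}\|e_\alpha^\delta\|^2$, so that $r_\alpha^\delta=A e_\alpha^\delta-(y^\delta-y)$ up to a term of size $O(\|e_\alpha^\delta\|^2)$ lying in $\operatorname{Ran}(A)$. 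These facts let me treat the nonlinear problem as the linear one governed by $A$ plus controllable remainders; here $\alpha=\alpha(\delta,y^\delta)$ is any parameter allowed by the one-sided discrepancy bound $\|r_\alpha^\delta\|\le\tau\delta$.

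Before constructing the data I would record the standard consequence of Assumption \ref{Ass.1} that the $x^*$-minimum-norm solution obeys $h\in\mathcal{N}(A)^\perp=\overline{\operatorname{Ran}(A^*)}$; thus $h\neq0$ forces $h$ to carry nonzero spectral mass of $A^*A$ away from $0$, and in particular there is a mode against which $h$ is not orthogonal. Using the hypothesis that $\sigma(AA^*)$ contains $\lambda_k\to0$ with $\lambda_k>0$, I will, for each large $k$, select an approximate singular system $(\sigma_k,v_k,\phi_k)$ with $\sigma_k=\sqrt{\lambda_k}$, $Av_k=\sigma_k\phi_k$, $A^*\phi_k=\sigma_k v_k$ and $\|v_k\|=\|\phi_k\|=1$, obtained from the spectral projection of $AA^*$ on a shrinking window about $\lambda_k$ (genuine eigenvectors in the compact case of Theorem \ref{thm:os}). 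I then set $\delta_k:=\lambda_k$ and use the admissible data $y^{\delta_k}:=y+s_k\delta_k\phi_k$ with $s_k=\operatorname{sign}\langle h,v_k\rangle$, noting $\|y^{\delta_k}-y\|=\delta_k$.

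The heart of the argument is to test the error identity against $v_k$. Abbreviating $\eta:=\langle e_\alpha^\delta,v_k\rangle$ and using $A^*\phi_k=\sigma_k v_k$, $\langle r_\alpha^\delta,\phi_k\rangle=\sigma_k\eta-s_k\delta_k+O(\sigma_k\|e_\alpha^\delta\|^2)$, together with $\|I-R\|\le\kappa_0\|e_\alpha^\delta\|$, I expect a scalar relation
\begin{equation*}
\eta\Big(1+\tfrac{\sigma_k^2}{\alpha}\Big)=\langle h,v_k\rangle+\tfrac{s_k\delta_k\sigma_k}{\alpha}+\tfrac1\alpha\,O\!\big(\sigma_k^2\|e_\alpha^\delta\|^2+\delta_k\|e_\alpha^\delta\|\big),
\end{equation*}
equivalently $\eta=\dfrac{\alpha\langle h,v_k\rangle+s_k\delta_k\sigma_k}{\alpha+\sigma_k^2}+\dfrac{O(\sigma_k^2\|e_\alpha^\delta\|^2+\delta_k\|e_\alpha^\delta\|)}{\alpha+\sigma_k^2}$. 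The choice of $s_k$ makes the two leading contributions add, so the main term has size at least $\delta_k\sigma_k/(\alpha+\sigma_k^2)$. Since every remainder is divided by $\alpha+\sigma_k^2\ge\sigma_k^2=\delta_k$, and (\ref{8}) forces $\|e_\alpha^\delta\|=o(\delta_k^{1/2})$, the correction is $o(\delta_k^{1/2})$ with no lower bound on $\alpha$ required.

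It remains to bound the selected $\alpha$ from above. Fixing one mode $j$ with $\langle h,v_j\rangle\neq0$ and testing the optimality identity against $v_j$ gives $\alpha\,|\langle e_\alpha^\delta,v_j\rangle-\langle h,v_j\rangle|\le\tau\delta_k(\sigma_j+\|A\|\kappa_0\|e_\alpha^\delta\|)$; since $e_\alpha^\delta\to0$ this yields $\alpha\le C\delta_k$ with $C$ independent of $k$. Inserting $\alpha+\sigma_k^2\le(C+1)\delta_k$ and $\sigma_k=\delta_k^{1/2}$ into the lower bound for $\eta$ gives $\|e_\alpha^\delta\|\ge|\eta|\ge(C+1)^{-1}\delta_k^{1/2}-o(\delta_k^{1/2})$, contradicting (\ref{8}) and forcing $h=0$. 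The hard part is exactly this bookkeeping: verifying that every nonlinear remainder inherits the favorable denominator $\alpha+\sigma_k^2$ so that it is genuinely $o(\delta_k^{1/2})$ uniformly over all $\alpha$ permitted by the one-sided discrepancy bound, and making the passage from eigenvectors to spectral projections rigorous when $\lambda_k$ is not an eigenvalue.
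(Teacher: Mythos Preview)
Your contrapositive strategy is sound and genuinely different from the paper's route. The paper does \emph{not} assume $h\ne 0$ up front; instead it first compares $x_\a^\d$ with the linearized Tikhonov iterate $\hat x_\a^\d$ of (\ref{LTRH1}), obtaining $\|\hat x_\a^\d-x^\dag\|\le(1+C_0)\|x_\a^\d-x^\dag\|=o(\d^{1/2})$, and then constructs data $y^{\d_k}$ via the spectral projection $G_{\la_k}=F_{3\la_k/2}-F_{\la_k/2}$ for which $\|\hat x_{\a_k}^{\d_k}-x^\dag\|^2\ge \d_k/[2(\a_k/\d_k+3/2)^2]$. This forces $\a_k/\d_k\to\infty$ unconditionally, and the Euler equation then gives $\|x_{\a_k}^{\d_k}-x^*\|\le C_1\tau\d_k/\a_k\to 0$, whence $x^*=x^\dag$. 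Your argument is dual: under $h\ne 0$ you extract $\a\le C\d$ from the optimality identity, and then your projection onto $v_k$ yields the lower bound $c\,\d_k^{1/2}$, contradicting (\ref{8}). Both rest on essentially the same spectral construction; the paper's version cleanly isolates the nonlinearity in the single estimate $\|x_\a^\d-\hat x_\a^\d\|\le C_0\|x_\a^\d-x^\dag\|$, while yours avoids introducing $\hat x_\a^\d$ at the cost of the scalar bookkeeping you flag as the ``hard part''.

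One step needs repair. The claim that Assumption~\ref{Ass.1} forces $h=x^*-x^\dag\in\N(A)^\perp$ is not a standard consequence and in general fails for nonlinear $F$: the solution set $\{F(x)=y\}$ need not be an affine translate of $\N(A)$, so the first-order minimality condition for $\|x-x^*\|$ over this set does not reduce to orthogonality to $\N(A)$. Fortunately you do not need this claim. For the upper bound $\a\le C\d$ you may test the identity $\a(e_\a^\d-h)=-R^*A^*r_\a^\d$ against \emph{any} unit vector $v$ with $\l h,v\r\ne 0$ (for instance $v=h/\|h\|$), since $\|ARv\|\le\|Av\|+\|A\|\kappa_0\|e_\a^\d\|$ is bounded whether or not $v$ is a singular vector; this already gives $\a|\l h,v\r|/2\le \tau\d(\|A\|+\|A\|\kappa_0\|e_\a^\d\|)$ for small $\d$. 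With that fix (and the convention $s_k=1$ when $\l h,v_k\r=0$), your argument goes through.
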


\begin{proof}
Let $\a:= \a(\d, y^\d)$.  According to (\ref{8}), we have $\|x_\a^\d - x^\dag\| < \rho$ 
for all $y^\d$ satisfying $\|y^\d - y\|\le \d$ provided $\d>0$ is sufficiently small. Thus 
$x_\a^\d$ is an interior point of $\mbox{dom}(F)$ and consequently there holds the Euler equation 
$$
F'(x_\a^\d)^*(F(x_\a^\d)-y^\d) + \a (x_\a^\d - x^*) =0.
$$
Manipulating this equation we then obtain
\begin{align*}
(\a I + A^*A) (x_\a^\d - x^\dag) 
& = \a (x^* - x^\dag) + F'(x_\a^\d)^* (y^\d - F(x_\a^\d)) + A^* A (x_\a^\d - x^\dag) \\
& = \a (x^* - x^\dag) + (F'(x_\a^\d)^* - A^*) (y^\d - F(x_\a^\d)) \\
& \quad \, - A^* (F(x_\a^\d)- y^\d - A(x_\a^\d - x^\dag))
\end{align*}
which gives 
\begin{align*}
x_\a^\d - x^\dag
& = (\a I + A^*A)^{-1} \left(\a (x^* - x^\dag) + A^*(y^\d-y)\right) \\
& \quad \, + (\a I + A^*A)^{-1} (F'(x_\a^\d)^* - A^*) (y^\d - F(x_\a^\d)) \\
& \quad \, - (\a I + A^*A)^{-1} A^* (F(x_\a^\d)- y - A(x_\a^\d - x^\dag)).
\end{align*}
By using the definition of $\hat x_\a^\d$, we thus have  
\begin{align}\label{TRH:10.6.21}
x_\a^\d-\hat{x}_\a^\d
& = -(\a I+ A^*A)^{-1} A^* \left(F(x_\a^\d)-y-A(x_\a^\d-x^\dag)\right) \nonumber \\
& \quad \, + (\a I+ A^*A)^{-1} \left(F'(x_\a^\d)^*-A^*\right)\left(y^\d-F(x_\a^\d)\right).
\end{align}
Since $x_\a^\d \in B_\rho(x^\dag)$, we may use Assumption \ref{Ass.1} to obtain 
\begin{align*}
x_\a^\d - \hat x_\a^\d 
& = - \int_0^1 (\a I + A^*A)^{-1} A^* A
\left(R(x^\dag + t(x_\a^\d - x^\dag), x^\dag) - I\right) (x_\a^\d - x^\dag) dt \\
& \quad \, + (\a I + A^*A)^{-1} \left(I - R(x^\dag, x_\a^\d)^*\right) 
F'(x_\a^\d)^* (y^\d - F(x_\a^\d)).
\end{align*}
and consequently 
\begin{align*}
\|x_\a^\d - \hat x_\a^\d \| 
& \le  \int_0^1 t \kappa_0 \|x_\a^\d - x^\dag\|^2 dt
+ \frac{1}{\a} \|I - R(x^\dag, x_\a^\d)\| \|F'(x_\a^\d)^* (y^\d - F(x_\a^\d))\| \displaybreak[0]\\
& \le \frac{1}{2} \kappa_0 \|x_\a^\d - x^\dag\|^2 + \kappa_0 \|x_\a^\d - x^\dag\| \|x_\a^\d - x^*\| 
\le C_0 \|x_\a^\d - x^\dag\|,
\end{align*}
where $C_0:= (3\rho/2 + \|x^* - x^\dag\|) \kappa_0$. Therefore 
\begin{align*}
\|\hat x_\a^\d - x^\dag \| \le (1 + C_0) \|x_\a^\d - x^\dag\|.
\end{align*}
By virtue of (\ref{8}) we then have
\begin{align}\label{TRH:8.6.21}
\sup\left\{\|\hat x_\a^\d-x^\dag\|: \|y^\d-y\| \le \d\right\} = o(\d^{1/2}).
\end{align}

Based on (\ref{TRH:8.6.21}) and the assumption on $\sigma(AA^*)$, we will show that 
for $\d_k:= \la_k$ there exists $y^{\d_k}$ satisfying $\|y^{\d_k} - y\| \le \d_k$ 
such that 
$$
\d_k /\a_k \to 0 \quad \mbox{ as } k \to \infty,
$$
where $\a_k := \a(\d_k, y^{\d_k})$. To see this, let $\{F_\la\}$ denote the spectral 
family generated by the self-adjoint operator $AA^*$ and for each $k$ we set
$$
G_{\la_k}:=F_{3\la_k/2}-F_{\la_k/2}.
$$
Then $G_{\la_k}$ is an orthogonal projection on $Y$ and commutes with $(\a I+AA^*)^{-1}$. 
We then follow an idea from \cite{EG1988,N1997} and consider the perturbed data defined by
$$
y^{\d_k}:=y - \d_k G_{\la_k} z_k,
$$
where
$$
z_k:=\left\{\begin{array}{lll}
\|G_{\la_k}A (x^\dag - x^*)\|^{-1} A (x^\dag-x^*), & \mbox{ if }
G_{\la_k}A (x^\dag-x^*) \ne 0,\\
\mbox{arbitrary with } \|G_{\la_k}z_k\|=1, & \mbox{ otherwise}.
\end{array}\right.
$$
Since $\la_k \in \sigma(AA^*)$, such $y^{\d_k}$ is well-defined. It is clear that 
$\|y^{\d_k}-y\|=\d_k$. Let 
$$
\hat x_{\a_k} := x^\dag + \a_k (\a_k I + A^*A)^{-1} (x^*-x^\dag).
$$
From the definitions of $\hat x_{\a_k}^{\d_k}$ and $\hat x_{\a_k}$ 
it follows that
$$
\hat x_{\a_k}^{\d_k} - \hat x_{\a_k} = -\d_k (\a_k I+A^*A)^{-1} A^* G_{\la_k} z_k.
$$
Hence, by using $G_{\la_k}^2 = G_{\la_k}$, the definition of $z_k$ and $\|G_{\la_k} z_k\|=1$, we have
\begin{align*}
& 2\left\l \hat x_{\a_k}-x^\dag, \hat x_{\a_k}^{\d_k} - \hat x_{\a_k}\right\r \\
&=-2\a_k \d_k \left\l (\a_k I+AA^*)^{-1} A (x^*-x^\dag), (\a_k I+AA^*)^{-1} G_{\la_k} z_k\right\r \displaybreak[0]\\
&=2\a_k \d_k \left\l (\a_k I+AA^*)^{-1} G_{\la_k} A (x^\dag-x^*), (\a_k I+AA^*)^{-1} G_{\la_k} z_k\right\r \displaybreak[0]\\
&=2\a_k\d_k \|G_{\la_k} A (x^\dag-x^*)\| \|(\a_k I+AA^*)^{-1} G_{\la_k} z_k\|^2 \displaybreak[0]\\
&\ge 0
\end{align*}
and
\begin{align*}
\|\hat x_{\a_k}^{\d_k} - \hat x_{\a_k}\|^2
&=\d_k^2 \|(AA^*)^{1/2} (\a_k I+AA^*)^{-1} G_{\la_k} z_k\|^2 \displaybreak[0] \\
& =\d_k^2 \int_{\la_k/2}^{3\la_k/2} \frac{\la}{(\a_k+\la)^2} d\|F_\la G_{\la_k} z_k\|^2 \displaybreak[0]\\
& \ge \d_k^2 \frac{\la_k/2}{(\a_k+3\la_k/2)^2} \|G_{\la_k} z_k\|^2 \\
& = \d_k^2 \frac{\la_k/2}{(\a_k+3\la_k/2)^2}.
\end{align*}
Therefore, by using the identity
\begin{align*}
\|\hat x_{\a_k}^{\d_k} - x^\dag\|^2
=\|\hat x_{\a_k} - x^\dag\|^2 + \|\hat x_{\a_k}^{\d_k} - \hat x_{\a_k}\|^2
+ 2 \left\l \hat x_{\a_k} - x^\dag, \hat x_{\a_k}^{\d_k} - \hat x_{\a_k}\right\r,
\end{align*}
we obtain
\begin{align}\label{MDPS2}
\|\hat x_{\a_k}^{\d_k} - x^\dag\|^2 
\ge \|\hat x_{\a_k}^{\d_k} - \hat x_{\a_k}\|^2
\ge \d_k^2 \frac{\la_k/2}{(\a_k+3\la_k/2)^2} 
= \frac{\d_k^2/\la_k}{2(\a_k/\la_k+ 3/2)^2}.
\end{align}
Since $\d_k = \la_k$, it then follows from (\ref{TRH:8.6.21}) and (\ref{MDPS2}) that
$$
\frac{\d_k}{2(\a_k/\d_k+3/2)^2} \le \|\hat x_{\a_k}^{\d_k} - x^\dag\|^2 = o(\d_k)
$$
which implies that $\a_k/\d_k \to \infty$ as $k\to \infty$. 

Finally we show $x^\dag = x^*$. Let
$
C_1 := \sup_{x\in B_\rho(x^\dag)} \|F'(x)\|
$
which is finite by Assumption \ref{Ass.1}. By the definition of $\a_k$ and 
the Euler equation for $x_{\a_k}^{\d_k}$ we have
\begin{align*}
\|x_{\a_k}^{\d_k}-x^*\| 
= \frac{1}{\a_k} \|F'(x_{\a_k}^{\d_k})^* (F(x_{\a_k}^{\d_k})-y^{\d_k})\| 
\le \frac{C_1}{\a_k} \|F(x_{\a_k}^{\d_k})-y^{\d_k}\| 
\le \frac{C_1 \tau \d_k}{\a_k} \to 0
\end{align*}
as $k \to \infty$. Since (\ref{8}) also implies $\|x_{\a_k}^{\d_k}-x^\dag\| \to 0$ as $k\to \infty$,
we therefore have $x^\dag = x^*$. 
\end{proof}

The saturation result given in Theorem \ref{TRH.thm1} covers not only Rule \ref{Rule:DP} but also other 
possible parameter choice rules including the sequential discrepancy principle (\cite{HM2012}) below. 

\begin{Rule}[Sequential discrepancy principle]\label{SDP}
Let $\tau>1$, $\a_0>0$ and $0<\gamma <1$ be given numbers, and let 
$$
\Delta_\gamma:=\{\a_0 \gamma^j: j =0, 1, \cdots\}. 
$$
Define $\a(\d, y^\d)$ to be the largest number in $\Delta_\gamma$ such that 
$\|F(x_{\a(\d, y^\d)}^\d) - y^\d \| \le \tau \d$. 
\end{Rule}

We remark that for nonlinear ill-posed problems there are some issues concerning Rule \ref{Rule:DP} 
which require serious consideration. First, the existence of $\a(\d, y^\d)$ in  Rule \ref{Rule:DP} 
is not always guaranteed in general. Furthermore, even if 
such an $\a(\d, y^\d)$ exists, additional work needs to be executed to find it which can be 
time-consuming. The sequential discrepancy principle can resolve these issues automatically. 
Indeed, by the definition of $x_\a^\d$ it is easy to see that $\lim_{\a\to 0} \|F(x_\a^\d) - y^\d\|\le \d$
which implies the existence of $\a(\d, y^\d)$ by Rule \ref{SDP}. According to Theorem \ref{TRH.thm1},
under merely Assumption \ref{Ass.1}, Tikhonov regularization (\ref{TRH3}) with Rule 
\ref{SDP} exhibits the saturation. 

Next we will provide a saturation result under the following Lipschitz continuity 
of the Fr\'{e}chet derivative of the forward operator $F$. 

\begin{Assumption}\label{TRH:Lip}
{\it There is $\rho>0$ such that $B_\rho(x^\dag) \subset \emph{dom}(F)$ and $F$ is Fr\'{e}chet 
differentiable on $B_\rho(x^\dag)$. Moreover, there is $L\ge 0$ such that
$$
\|F'(x)-F'(z)\|\le L\|x-z\|, \qquad \forall x, z\in B_\rho(x^\dag).
$$
}
\end{Assumption}

We will show that, under Assumption \ref{TRH:Lip}, Tikhonov regularization (\ref{TRH3}) 
with the regularization parameter chosen by Rule \ref{Rule:DP} or Rule \ref{SDP} does not 
possess the convergence rate $o(\d^{1/2})$ in the worst case scenario even if 
$x^\dag - x^* \in \mbox{Ran}((A^*A)^\nu)$ with $\nu\ge 1/2$. We need the following 
result which is essentially contained in the proof of \cite[Theorem 2.4]{EKN89}, 
see also \cite{EHN96,TJ2003}.

\begin{lemma} \label{TRH:t22}
Let Assumption \ref{TRH:Lip} hold and let $x_\a^\d$ be a solution of (\ref{TRH3}) 
with $x_\a^\d \in B_\rho(x^\dag)$. If $x^*-x^\dag=F'(x^\dag)^* u$ for some $u\in Y$ 
with $L \|u\| < 1 $, then
\begin{equation} \label{TRH:f22}
\|x_\a^\d - x^\dag\| \le \frac{\d + \a
\|u\|}{\sqrt{\a(1-L\|u\|)}} \quad \mbox{and} \quad \|
F(x_\a^\d) - y^\d \| \le \d + 2 \a \|u\|.
\end{equation}
\end{lemma}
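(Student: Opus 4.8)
The plan is to start from the defining property of the regularized solution $x_\a^\d$, namely that it minimizes the Tikhonov functional in (\ref{TRH3}). Since $x_\a^\d \in B_\rho(x^\dag)$ and $x^\dag$ itself is a competitor (with $F(x^\dag)=y$), I would write the minimality inequality
\begin{align*}
\|F(x_\a^\d)-y^\d\|^2 + \a \|x_\a^\d - x^*\|^2 \le \|F(x^\dag)-y^\d\|^2 + \a\|x^\dag - x^*\|^2 = \|y-y^\d\|^2 + \a\|x^\dag - x^*\|^2.
\end{align*}
Using $\|y-y^\d\|\le \d$, this bounds the left-hand side by $\d^2 + \a\|x^\dag-x^*\|^2$. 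The strategy is then to expand $\|x_\a^\d - x^*\|^2$ around $x^\dag$ by writing $x_\a^\d - x^* = (x_\a^\d - x^\dag) + (x^\dag - x^*)$, so that
\begin{align*}
\|x_\a^\d - x^*\|^2 = \|x_\a^\d - x^\dag\|^2 + 2\l x_\a^\d - x^\dag, x^\dag - x^*\r + \|x^\dag - x^*\|^2.
\end{align*}
Substituting and cancelling the common $\a\|x^\dag-x^*\|^2$ term gives
\begin{align*}
\|F(x_\a^\d)-y^\d\|^2 + \a\|x_\a^\d - x^\dag\|^2 \le \d^2 - 2\a\l x_\a^\d - x^\dag, x^\dag - x^*\r.
\end{align*}

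Next I would exploit the source condition $x^\dag - x^* = F'(x^\dag)^* u$, so that $x^* - x^\dag = -F'(x^\dag)^* u$ and the cross term becomes $2\a\l x_\a^\d - x^\dag, F'(x^\dag)^* u\r = 2\a\l F'(x^\dag)(x_\a^\d - x^\dag), u\r$. The key move is to replace the linear quantity $F'(x^\dag)(x_\a^\d - x^\dag)$ by the nonlinear discrepancy $F(x_\a^\d)-y$: using Assumption \ref{TRH:Lip}, the Taylor remainder satisfies
\begin{align*}
\|F(x_\a^\d)-y - F'(x^\dag)(x_\a^\d - x^\dag)\| \le \frac{L}{2}\|x_\a^\d - x^\dag\|^2,
\end{align*}
since $y=F(x^\dag)$ and $F'$ is $L$-Lipschitz on $B_\rho(x^\dag)$. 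I would therefore write $F'(x^\dag)(x_\a^\d - x^\dag) = (F(x_\a^\d)-y) - r$ with $\|r\|\le \frac{L}{2}\|x_\a^\d-x^\dag\|^2$, giving a cross term of the form $2\a\l F(x_\a^\d)-y, u\r - 2\a\l r, u\r$. The term $-2\a\l r,u\r$ is bounded by $\a L\|u\|\,\|x_\a^\d-x^\dag\|^2$, which I can absorb into the left-hand side against $\a\|x_\a^\d-x^\dag\|^2$, leaving the factor $\a(1-L\|u\|)\|x_\a^\d-x^\dag\|^2$; this is exactly where the hypothesis $L\|u\|<1$ is used to keep the coefficient positive.

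At this stage I would have an inequality of the shape
\begin{align*}
\|F(x_\a^\d)-y^\d\|^2 + \a(1-L\|u\|)\|x_\a^\d-x^\dag\|^2 \le \d^2 + 2\a\l F(x_\a^\d)-y, u\r.
\end{align*}
To handle the remaining inner product I would split $F(x_\a^\d)-y = (F(x_\a^\d)-y^\d)+(y^\d-y)$ and estimate
\begin{align*}
2\a\l F(x_\a^\d)-y, u\r \le 2\a\|u\|\,\|F(x_\a^\d)-y^\d\| + 2\a\|u\|\,\d.
\end{align*}
Setting $s:=\|F(x_\a^\d)-y^\d\|$, the inequality becomes $s^2 + \a(1-L\|u\|)\|x_\a^\d-x^\dag\|^2 \le \d^2 + 2\a\|u\|s + 2\a\|u\|\d = (\d+2\a\|u\|s/\ldots)$ — more cleanly, dropping the nonnegative term gives $s^2 - 2\a\|u\| s - (\d^2+2\a\|u\|\d)\le 0$, a quadratic in $s$ whose larger root yields $s \le \a\|u\| + \sqrt{\a^2\|u\|^2 + \d^2 + 2\a\|u\|\d} = \a\|u\| + \sqrt{(\d+\a\|u\|)^2} = \d + 2\a\|u\|$, which is precisely the second estimate in (\ref{TRH:f22}). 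For the first estimate, I would instead keep the $\a(1-L\|u\|)\|x_\a^\d-x^\dag\|^2$ term and bound the right side, using $s\le \d+2\a\|u\|$ just obtained together with $\d^2+2\a\|u\|s \le (\d+\a\|u\|)^2 + \text{(controlled terms)}$, to arrive at $\a(1-L\|u\|)\|x_\a^\d-x^\dag\|^2 \le (\d+\a\|u\|)^2$ and hence the claimed bound on $\|x_\a^\d-x^\dag\|$.

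The main obstacle I anticipate is not any single inequality but the careful bookkeeping of the cross terms: one must route the linearization error through Assumption \ref{TRH:Lip} so that it is absorbed on the left rather than the right, and one must complete the square (or solve the quadratic in $s$) cleanly so that the square root collapses to $\d+\a\|u\|$. The hypothesis $L\|u\|<1$ enters exactly to preserve positivity of the coefficient $\a(1-L\|u\|)$ after absorption, and getting the constants to match (\ref{TRH:f22}) on the nose requires tracking factors of $2$ in the Taylor remainder and in the source-term estimate with some care.
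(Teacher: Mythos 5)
Your argument is the standard one from Engl--Kunisch--Neubauer that the paper itself invokes for this lemma (it cites the proof of Theorem 2.4 of \cite{EKN89} rather than reproving it): minimality of $x_\a^\d$ against the competitor $x^\dag$, expansion of $\|x_\a^\d-x^*\|^2$ about $x^\dag$, the source condition together with the Taylor remainder bound $\frac{L}{2}\|x_\a^\d-x^\dag\|^2$ to absorb the linearization error into $\a(1-L\|u\|)\|x_\a^\d-x^\dag\|^2$, and a quadratic inequality in $s:=\|F(x_\a^\d)-y^\d\|$. The one step you should tighten is the derivation of the first estimate in (\ref{TRH:f22}): the route you hint at --- substituting $s\le \d+2\a\|u\|$ into the term $2\a\|u\|s$ and discarding $-s^2$ --- only yields $\a(1-L\|u\|)\|x_\a^\d-x^\dag\|^2\le(\d+2\a\|u\|)^2$, which is weaker than claimed. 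The clean fix is to complete the square in $s$: from $2\a\|u\|s-s^2\le \a^2\|u\|^2$ the right-hand side of your key inequality is at most $\d^2+2\a\|u\|\d+\a^2\|u\|^2=(\d+\a\|u\|)^2$, which gives the stated bound on $\|x_\a^\d-x^\dag\|$ exactly. (The direction of the source condition --- the lemma writes $x^*-x^\dag=F'(x^\dag)^*u$ while you use $x^\dag-x^*=F'(x^\dag)^*u$ --- is immaterial, since only $\|u\|$ enters the estimates.)
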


\begin{theorem}\label{SMD:thm2}
Let Assumption \ref{TRH:Lip} hold and assume, for the operator $A: = F'(x^\dag)$, there exists a 
sequence of positive numbers $\{\la_k\}\subset \sigma(AA^*)$ such that 
$\lim_{k\to \infty} \la_k =0$. Consider the Tikhonov regularization (\ref{TRH3}) 
and let $\a(\d, y^\d)$ be determined by either Rule \ref{Rule:DP} or Rule \ref{SDP} with $\tau>1$. 
If $x^\dag - x^* = F'(x^\dag)^* u$ for some $u \in \N(F'(x^\dag)^*)^\perp$ with $L\|u\|<1$ 
and if
\begin{align}\label{13}
\sup\left\{\|x_{\a(\d, y^\d)}^\d - x^\dag\|: \|y^\d - y\| \le \d\right\} = o(\d^{1/2})
\end{align}
as $\d \to 0$, then $x^\dag = x^*$.
\end{theorem}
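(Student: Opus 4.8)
The plan is to run the same three-part strategy as in the proof of Theorem~\ref{TRH.thm1} — compare $x_\a^\d$ with the minimizer $\hat x_\a^\d$ of the linearized problem (\ref{LTRH}), push the rate (\ref{13}) onto $\hat x_\a^\d$, and then combine a spectral lower bound with the Euler equation — but replacing the use of Assumption~\ref{Ass.1} by Assumption~\ref{TRH:Lip} and Lemma~\ref{TRH:t22}. The identity (\ref{TRH:10.6.21}) is a consequence of the Euler equation alone, so it remains valid here; the difference is that, lacking the factorization of Assumption~\ref{Ass.1}, I must estimate it through the resolvent bounds $\|(\a I+A^*A)^{-1}A^*\|\le \frac{1}{2\sqrt\a}$ and $\|(\a I+A^*A)^{-1}\|\le \frac1\a$, which carry negative powers of $\a$. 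Neutralizing those powers will require a lower bound $\a\gtrsim\d$, and this is exactly the point where the source condition enters.

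First I would dispose of the trivial case: if $x^*=x^\dag$ there is nothing to prove, so I assume $x^*\ne x^\dag$ and aim for a contradiction; since $u\in\N(A^*)^\perp$ this forces $u\ne0$, so $\|u\|>0$. Next I would establish $\a(\d,y^\d)\ge c\,\d$ for a fixed constant $c>0$. By the second estimate in (\ref{TRH:f22}) we have $\|F(x_\a^\d)-y^\d\|\le \d+2\a\|u\|$; for Rule~\ref{Rule:DP} this and $\|F(x_\a^\d)-y^\d\|=\tau\d$ give $\a\ge (\tau-1)\d/(2\|u\|)$, while for Rule~\ref{SDP} I would apply the same estimate at the next larger grid value $\a/\gamma$, whose discrepancy exceeds $\tau\d$ by maximality, to get $\a\ge \gamma(\tau-1)\d/(2\|u\|)$. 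Either way a positive $c$ works (take $\gamma=1$ for Rule~\ref{Rule:DP}).

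With the lower bound in hand I would transfer the rate. Estimating (\ref{TRH:10.6.21}) via Assumption~\ref{TRH:Lip} in the forms $\|F(x_\a^\d)-y-A(x_\a^\d-x^\dag)\|\le \frac{L}{2}\|x_\a^\d-x^\dag\|^2$ and $\|F'(x_\a^\d)^*-A^*\|\le L\|x_\a^\d-x^\dag\|$, together with $\|F(x_\a^\d)-y^\d\|\le\tau\d$ and $\a\ge c\d$, yields
\begin{align*}
\|x_\a^\d-\hat x_\a^\d\|
&\le \frac{L}{4\sqrt\a}\|x_\a^\d-x^\dag\|^2+\frac{L\tau\d}{\a}\|x_\a^\d-x^\dag\| \\
&\le \frac{L}{4\sqrt{c\d}}\|x_\a^\d-x^\dag\|^2+\frac{L\tau}{c}\|x_\a^\d-x^\dag\|.
\end{align*}
Since (\ref{13}) gives $\|x_\a^\d-x^\dag\|=o(\d^{1/2})$ uniformly, the second term is $o(\d^{1/2})$ and the first is $o(\d)/\sqrt\d=o(\d^{1/2})$ as well, so $\sup\{\|\hat x_\a^\d-x^\dag\|:\|y^\d-y\|\le\d\}=o(\d^{1/2})$.

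From here the argument is identical to that of Theorem~\ref{TRH.thm1}: with $\d_k:=\la_k$ and the perturbed data $y^{\d_k}=y-\d_k G_{\la_k} z_k$ built from $G_{\la_k}=F_{3\la_k/2}-F_{\la_k/2}$, I would obtain the lower bound $\|\hat x_{\a_k}^{\d_k}-x^\dag\|^2\ge \d_k/\big(2(\a_k/\d_k+3/2)^2\big)$, which together with the $o(\d_k)$ rate forces $\a_k/\d_k\to\infty$; the Euler equation then gives $\|x_{\a_k}^{\d_k}-x^*\|\le C_1\tau\d_k/\a_k\to0$, where $C_1:=\sup_{x\in B_\rho(x^\dag)}\|F'(x)\|<\infty$ since $\|F'(x)\|\le\|A\|+L\rho$ under Assumption~\ref{TRH:Lip}, and together with $\|x_{\a_k}^{\d_k}-x^\dag\|\to0$ this yields $x^\dag=x^*$, contradicting $x^*\ne x^\dag$. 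The main obstacle is the estimate displayed above: unlike in Theorem~\ref{TRH.thm1}, the linearization error now carries inverse powers of $\a$, and the whole proof hinges on the source-condition lower bound $\a\gtrsim\d$ from Lemma~\ref{TRH:t22} being precisely strong enough to cancel them.
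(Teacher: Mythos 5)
Your proposal is correct and follows essentially the same route as the paper: the identity (\ref{TRH:10.6.21}), the lower bound $\a \ge (\tau-1)\gamma\d/(2\|u\|)$ obtained from the second estimate of Lemma \ref{TRH:t22} (applied at $\a/\gamma$ for Rule \ref{SDP}), the transfer of the $o(\d^{1/2})$ rate to $\hat x_\a^\d$, and then verbatim the spectral construction from the proof of Theorem \ref{TRH.thm1}. The only (harmless) deviations are that you bound the first term of (\ref{TRH:10.6.21}) directly via $\|x_\a^\d-x^\dag\|^2=o(\d)$ and $\a\ge c\d$ rather than invoking the first inequality of Lemma \ref{TRH:t22}, and that you dispose of the case $u=0$ up front and phrase the endgame as a contradiction instead of a direct conclusion.
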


\begin{proof}
Let $\a :=\a(\d, y^\d)$. From (\ref{13}) it follows that $x_\a^\d \in B_\rho(x^\dag)$ for 
small $\d>0$. Thus, by virtue of (\ref{TRH:10.6.21}) and Assumption \ref{TRH:Lip} we have
\begin{align*}
\|x_\a^\d-\hat{x}_\a^\d\|
& \le  \frac{1}{2\sqrt{\a}} \|F(x_\a^\d)-y-A(x_\a^\d-x^\dag)\| 
+ \frac{1}{\a} \|F'(x_\a^\d)-A\| \|y^\d-F(x_\a^\d)\| \displaybreak[0] \\
& \le \frac{1}{4\sqrt{\a}} L \|x_\a^\d-x^\dag\|^2
+ \frac{1}{\a} L\|x_\a^\d-x^\dag\| \|y^\d-F(x_\a^\d)\|.
\end{align*}
With the help of Lemma \ref{TRH:t22} we then have
\begin{align}\label{TRH.12}
\|x_\a^\d-\hat{x}_\a^\d\|
\le  \frac{L(\d + \a \|u\|)}{4\a \sqrt{1-L\|u\|}} \|x_\a^\d-x^\dag\|
+ \frac{1}{\a} L\left(\d + 2\a \|u\|\right) \|x_\a^\d-x^\dag\|.
\end{align}
We next claim that 
\begin{align}\label{TRH.13}
\a \ge \frac{(\tau -1)\gamma \d}{2\|u\|}. 
\end{align}
Indeed, if $\a$ is determined by Rule \ref{Rule:DP}, then by using the second 
inequality in Lemma \ref{TRH:t22} we have
$$
\tau \d = \|F(x_\a^\d)-y^\d\| \le \d + 2\a \|u\|
$$
which implies that $\d \le 2\a\|u\|/(\tau-1)$ and hence (\ref{TRH.13}) holds as $0< \gamma <1$. 
If $\a$ is determined by Rule \ref{SDP}, then by using again the second inequality in 
Lemma \ref{TRH:t22} we can obtain 
$$
\tau \d < \|F(x_{\a/\gamma}^\d) - y^\d\| \le \d + \frac{2\a}{\gamma} \|u\|
$$
which implies again (\ref{TRH.13}). Therefore, it follows from (\ref{TRH.12}) and (\ref{TRH.13}) that 
\begin{align*}
\|x_\a^\d-\hat{x}_\a^\d\| \le C_0 L\|u\| \|x_\a^\d-x^\dag\|,
\end{align*}
where
$$
C_0 := 2 + \frac{2}{(\tau-1)\gamma} + \frac{2 + (\tau-1)\gamma}{4(\tau-1) \gamma \sqrt{1-L\|u\|}}.
$$
Consequently
$$
\|\hat x_\a^\d -x^\dag \| \le (1+ C_0 L\|u\|) \|x_\a^\d - x^\dag\|.
$$
and thus we may use (\ref{13}) to conclude 
\begin{align*}
\sup\left\{\|\hat x_\a^\d-x^\dag\|: \|y^\d-y\| \le \d\right\} = o(\d^{1/2}).
\end{align*}
Now we can follow the same argument in the proof of Theorem \ref{TRH.thm1} to 
conclude $x^\dag = x^*$.  
\end{proof}

\end{document}